\def\NN{{\mathbb{N}}}
\def\RR{{\mathbb{R}}}
\def\CC{{\mathbb{C}}}
\def\deg{{\mathrm{deg}}}
\def\rank{{\mathrm{rank}}}
\def\ch{{\mathrm{ch}}}
\def\Ind{{\mathrm{Ind}}}
\def\min{{\mathrm{min}}}
\def\codim{{\mathrm{codim}}}
  \CheckCommand*\refstepcounter[1]{\stepcounter{#1}%
      \protected@edef\@currentlabel
       {\csname p@#1\endcsname\csname the#1\endcsname}%
  }
  \renewcommand*\refstepcounter[1]{\stepcounter{#1}%
    \protected@edef\@currentlabel
      {\csname p@#1\expandafter\endcsname\csname the#1\endcsname}%
  }
  \def\labelformat#1{\expandafter\def\csname p@#1\endcsname##1}
  \DeclareRobustCommand\Ref[1]{\protected@edef\@tempa{\ref{#1}}%
     \expandafter\MakeUppercase\@tempa
  }
  \newcommand{\numberlike}[2]{%
     \expandafter\def\csname c@#1\endcsname{%
         \expandafter\csname c@#2\endcsname}%
  }
  \def\DefaultNumberTheoremWithin{section}
  \theoremstyle{plain}
  \newtheorem{Lemma}{Lemma}
     \numberwithin{Lemma}{\DefaultNumberTheoremWithin}
     \numberwithin{Claim}{\DefaultNumberTheoremWithin}
  \newtheorem{Theorem}{Theorem}
     \numberwithin{Theorem}{\DefaultNumberTheoremWithin}
     \numberwithin{Corollary}{\DefaultNumberTheoremWithin}
     \numberwithin{Proposition}{\DefaultNumberTheoremWithin}
     \numberwithin{Conjecture}{\DefaultNumberTheoremWithin}
  \theoremstyle{definition}
     \numberwithin{Definition}{\DefaultNumberTheoremWithin}
  \theoremstyle{definition}
  \newtheorem{Question}{Question}
     \numberwithin{Question}{\DefaultNumberTheoremWithin}
  \theoremstyle{definition}
     \numberwithin{Problem}{\DefaultNumberTheoremWithin}
  \theoremstyle{remark}
     \numberwithin{Remark}{\DefaultNumberTheoremWithin}
  \theoremstyle{remark}
     \numberwithin{Example}{\DefaultNumberTheoremWithin}
     \numberwithin{Case}{Lemma}
     \numberwithin{Step}{Lemma}
     \theoremstyle{table}
  \newtheorem{Table}{Table}
     \numberwithin{Table}{\DefaultNumberTheoremWithin}
  \theoremstyle{table}
  \def\eqref{\ref}
\begin{document}


 \title[Representation stability on the cohomology of complements]{Representation stability on the cohomology of complements of subspace arrangements}


  \author{Artur Rapp}
     \address{Fachbereich Mathematik und Informatik\\
              Philipps-Universit\"at Marburg\\
              35032 Marburg\\
              Germany}
     \email{rapp202@mathematik.uni-marburg.de}

  \thanks{I thank Nir Gadish for helpful comments.}

  \begin{abstract}
  We study representation stability in the sense of Church and Farb of 
sequences of cohomology groups of
complements of arrangements of linear subspaces in real and complex 
space as $S_n$-modules. We consider
arrangement of linear subspaces defined by sets of diagonal equalities 
$x_i = x_j$ and invariant under the
action of $S_n$ permuting the coordinates. We provide bounds on the point 
when stabilization occurs and
an alternative proof for the fact that stabilization happens. The latter 
is a special case of a very general
stabilization result of Gadish and for the pure braid space the result 
is part of the work of Church and Farb.
For this space better stabilization bounds were obtained by Hersh and Reiner.

  \end{abstract}

  \maketitle

\section{Introduction}
           \label{sec:introduction}   
In this paper, we consider arrangements of diagonal subspaces of $\RR^{dn}$ for natural numbers $d$ and $n$. Let $\pi$ be a set partition of $\{1,...,n\}$. Let $W_{\pi}^d$ be the linear subspace of $n$-tuples $(w_1,...,w_n)$ of points in $\RR^d$ such that $w_i=w_j$ whenever $i$ and $j$ are in the same block of $\pi$. For an integer partition $\lambda$ we denote by $\mathcal{A}_{\lambda}^d$ the arrangement of all subspaces $W_{\pi}^d$ such that $\pi$ is of type $\lambda$. More generally, set $\mathcal{A}_{\Lambda}^d=\cup_{\lambda\in\Lambda} \mathcal{A}_{\lambda}^d$ for every finite set $\Lambda$ of integer partitions of $n$. The complement $\mathcal{M}_{\Lambda}^d=\RR^{dn}\setminus \cup_{W\in\mathcal{A}_{\Lambda}^d}W$ is a real manifold. If $\Lambda=\{\lambda\}$, we write $\mathcal{M}^d_{\lambda}$ for $\mathcal{M}^d_{\Lambda}$. All representations, homology groups and cohomology groups in this paper are taken with coefficients in $\CC$. The action of the symmetric group $S_n$ on $n$-tuples of points in $\RR^d$ by permuting the coordinates induces an $S_n$-representation on the reduced singular cohomology $\tilde H^i(\mathcal{M}_{\Lambda}^d)$. We look into representation stability in the sense of Church and Farb (see \cite{CF}) of these modules. Our main purpose is to prove that sequences of these modules stabilize and to obtain stabilization bounds. This is the content of \ref{main}. The fact that these sequences stabilize is a special case of a result of Gadish (\cite[Theorem A]{G}). The case $\Lambda=\{(2,1^{n-2})\}$ was proved by Church (\cite[Theorem 1]{C}) and for this case Hersh and Reiner provided better stabilization bounds (\cite[Theorem 1.1]{HR}). For an integer partition $\lambda$ we write $l(\lambda)$ for its length. As in \cite[Definition 2.5]{HR} let $\rank(\lambda):=|\lambda|-l(\lambda)$ be the rank of $\lambda$.
\newpage
\begin{Theorem}\label{main}
Let $\Lambda$ be a nonempty finite set of integer partitions of the number $n_0$ not containing $(1^{n_0})$. For every $n\ge n_0$ let $\Lambda^{(n)}$ be the set of all integer partitions of $n$ obtained from an integer partition in $\Lambda$ by adding $n-n_0$ parts of size $1$. Let $\rank(\Lambda)=\min\{\rank(\lambda)~|~\lambda\in \Lambda\}$. For every $i$ and $d\ge 2$ the sequence $\{\tilde H^i(\mathcal{M}^d_{\Lambda^{(n)}})\}_n$ stabilizes at $4(i+1-\rank(\Lambda))/(d-1)$. 
\end{Theorem}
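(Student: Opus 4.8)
The plan is to compute $\tilde H^i(\mathcal{M}^d_{\Lambda^{(n)}})$ by the $S_n$-equivariant Goresky--MacPherson formula and to recognize the result as a sum of representations induced from a symmetric group whose size is bounded independently of $n$. As in the treatment of the braid case in \cite{C,HR}, this formula identifies $\tilde H^i(\mathcal{M}^d_{\Lambda^{(n)}})$ with the direct sum, over representatives $\pi$ of the $S_n$-orbits on the intersection lattice $L_{\Lambda^{(n)}}$ (a subposet of the partition lattice $\Pi_n$, with $\hat 0$ the all-singletons partition), of the modules $\Ind_{\mathrm{Stab}(\pi)}^{S_n}\!\big(\tilde H_{d(n-|\pi|)-i-2}(\Delta(\hat 0,\pi))\otimes\epsilon_\pi\big)$, where $\Delta(\hat 0,\pi)$ is the order complex of the open interval $(\hat 0,\pi)$ and $\epsilon_\pi$ is the one-dimensional orientation character of the normal space of $W^d_\pi$. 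Writing $B\subseteq\{1,\dots,n\}$ for the union of the non-singleton blocks of $\pi$ and $k=|B|$, one observes that the interval $(\hat 0,\pi)$, the complex $\Delta(\hat 0,\pi)$ and the character $\epsilon_\pi$ depend only on the restriction $\pi|_B$: the singletons contribute neither to the interval nor to any normal direction. Hence $\mathrm{Stab}(\pi)=\mathrm{Aut}(\pi|_B)\times S_{\{1,\dots,n\}\setminus B}$ with the second factor acting trivially, and transitivity of induction rewrites each orbit summand as $\Ind_{S_k\times S_{n-k}}^{S_n}\!\big(W_{\pi|_B}\boxtimes \mathbf{1}\big)$ for a fixed $S_k$-module $W_{\pi|_B}$ independent of $n$.

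The next step is to bound $k$ in terms of $i$. Set $r=n-|\pi|$, so the above homology lives in degree $j=dr-i-2$, and for $j$ to be admissible we need $-1\le j\le\dim\Delta(\hat 0,\pi)$. Since every non-singleton block has size at least $2$, the number $b$ of such blocks satisfies $b\le r$, whence $k=r+b\le 2r$. To bound $\dim\Delta(\hat 0,\pi)$ I would use that $L_{\Lambda^{(n)}}$ is generated under joins by its atoms, each of which has $\Pi_n$-rank at least $\rank(\Lambda)$: along any chain $\hat 0< x_1<\dots< x_p<\pi$ inside the open interval the $\Pi_n$-rank strictly increases and already $x_1$ has rank at least $\rank(\Lambda)$, so $\rank(\Lambda)+(p-1)\le r-1$ and $\dim\Delta(\hat 0,\pi)\le p-1\le r-\rank(\Lambda)-1$. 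Combining this with $j\le\dim\Delta(\hat 0,\pi)$ gives $dr-i-2\le r-\rank(\Lambda)-1$, that is $(d-1)r\le i+1-\rank(\Lambda)$, and therefore $k\le 2r\le 2\,(i+1-\rank(\Lambda))/(d-1)$ for every $\pi$ contributing to $\tilde H^i$.

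Finally I would read off stability from Pieri's rule. Each summand $\Ind_{S_k\times S_{n-k}}^{S_n}(V_\mu\boxtimes\mathbf{1})$ with $\mu\vdash k$ decomposes as $\bigoplus_\nu V_\nu$, the sum running over $\nu\vdash n$ obtained from $\mu$ by adding a horizontal strip of size $n-k$; in the Church--Farb notation this means that the multiplicity of $V(\lambda)_n$ becomes independent of $n$ as soon as $n-|\lambda|\ge\max(\lambda_1,\mu_1)$, which is guaranteed once $n\ge 2k$. As all contributing $\pi$ have $k\le 2(i+1-\rank(\Lambda))/(d-1)$, the whole sum is multiplicity-stable for $n\ge 4(i+1-\rank(\Lambda))/(d-1)$, which is the stated bound; the natural inclusions $\mathcal{M}^d_{\Lambda^{(n)}}\hookrightarrow\mathcal{M}^d_{\Lambda^{(n+1)}}$ adding a free coordinate supply the maps, and the horizontal-strip description shows that in the same range they are injective with $S_{n+1}$-span equal to the whole target. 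The main obstacle I anticipate is the equivariant bookkeeping of the orientation characters $\epsilon_\pi$ for odd $d$: one must check that each $\epsilon_\pi$ is trivial on the singleton factor $S_{n-k}$, so that induction is genuinely from $\mathbf{1}$ and Pieri applies (rather than from a sign, which would only reshuffle which $V(\lambda)_n$ occur), and then track how $\epsilon_\pi$ twists the fixed module $W_{\pi|_B}$; a secondary technical point is justifying the chain-length bound in the non-graded semilattice $L_{\Lambda^{(n)}}$.
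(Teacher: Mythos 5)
Your proposal is correct and follows essentially the same route as the paper: the Sundaram--Welker (equivariant Goresky--MacPherson) decomposition, the observation that singleton blocks contribute trivially to both the order-complex homology and the normal/orientation factor so each orbit summand is induced from $S_k\times S_{n-k}$ with $k\le 2r\le 2(i+1-\rank(\Lambda))/(d-1)$ via the chain-length bound in the join sublattice, and then Pieri's rule (the paper's Lemma~\ref{productwithsnalpha} with $\alpha=\emptyset$) to get stabilization once $n>2k$. The two worries you flag at the end are exactly the points the paper checks explicitly (triviality of the $S_{n-k}$-action on $H_{\codim(\pi)-1}(S^{dn-1}\cap\pi^{\perp})$ because singleton coordinates of $\pi^{\perp}$ vanish, and the atom-rank argument for the chain bound), and they go through as you anticipate.
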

In Section 2, we provide the definition of representation stability and prove \ref{main}. In Section 3, we consider the special case $\mathcal{M}^d_{(k,1^{n-k})}$ for $k\ge d+1$. We prove that stability in this case starts earlier than in the bound given in \ref{main}.  


\section{Proof of the main theorem}
           \label{Proof of the main theorem}
An integer partition $\lambda$ of a natural number $n$ is a finite sequence $(\lambda_1,\lambda_2,...)$ with $\lambda_1\ge \lambda_2\ge...$ and $\sum_{i\ge 1}\lambda_i=n$. We sometimes denote $\lambda$ by $(1^{m_1(\lambda)},...,n^{m_n(\lambda)})$ where $m_i(\lambda)$ is the number of occurences of the number $i$ in $\lambda$ for $1\le i\le n$.
Given an integer partition $\lambda=(\lambda_1,\lambda_2,...)$ of $n$ we write $\lambda+\square$ to denote $(\lambda_1+1,\lambda_2,\lambda_3,...)$.
Let $V$ be an $S_n$-representation and $\sum_{\lambda\vdash n} a_{\lambda}S^{\lambda}$, $a_{\lambda}\in\CC$, its decomposition into irreducible $S_n$-representations $S^{\lambda}$. Then we write
$$ V+\square:= \sum_{\lambda\vdash n} a_{\lambda}S^{\lambda+\square}. $$
We use the same notation, if we replace $S_n$-representations by symmetric functions (see \cite{M} for background on symmetric functions). As in \cite{M} we write $\ch$ for the Frobenius characteristic and $s_{\lambda}$ for the Schur function indexed by the integer partition $\lambda$. The above equation becomes
$$ \ch(V)+\square:= \sum_{\lambda\vdash n} a_{\lambda}s_{\lambda+\square}. $$
Now let $n_0\in\NN$. Let $\{V_n\}_{n\ge n_0}$ be a sequence of $S_{n+n_0}$-representations or a sequence of characteristics of $S_{n+n_0}$-representations. We say that this sequence stabilizes at $m\ge n_0$, if
$$ V_n=V_{n-1}+\square $$
for all $n>m$. We say that the sequence stabilizes sharply at $m$, if $m$ is the smallest integer such that
$$ V_n=V_{n-1}+\square $$ for all $n>m$.
The following lemma is a generalization of \cite[Lemma 2.2]{HR}. 

\begin{Lemma}\label{productwithsnalpha}
Let $\lambda$ and $\alpha$ be integer partitions. For every $n\ge \alpha_1$ we consider the integer partition $(n,\alpha)=(n,\alpha_1,\alpha_2,...)$. The sequence $\{s_{(n,\alpha)}s_{\lambda}\}_{n}$ stabilizes sharply at $\lambda_1+\alpha_1$. In other words
$$ s_{(n,\alpha)}s_{\lambda}=s_{(n-1,\alpha)}s_{\lambda}+\square $$ if and only if $n>\lambda_1+\alpha_1$.
\end{Lemma}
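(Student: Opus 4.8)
The plan is to expand both sides in the Schur basis using the Littlewood--Richardson rule and compare coefficients. Write $s_{(n,\alpha)}s_{\lambda}=\sum_{\mu}c^{\mu}_{(n,\alpha),\lambda}\,s_{\mu}$, where $c^{\mu}_{(n,\alpha),\lambda}$ counts the Littlewood--Richardson fillings of the skew shape $\mu/(n,\alpha)$ with content $\lambda$; in particular every $\mu$ that occurs satisfies $(n,\alpha)\subseteq\mu$, so $\mu_1\ge n$. Since applying $+\square$ to $s_{(n-1,\alpha)}s_{\lambda}=\sum_{\nu}c^{\nu}_{(n-1,\alpha),\lambda}\,s_{\nu}$ yields $\sum_{\nu}c^{\nu}_{(n-1,\alpha),\lambda}\,s_{\nu+\square}$, the asserted identity is equivalent to the statement that $\mu\mapsto\mu-\square$ is a coefficient-preserving bijection from the support of $s_{(n,\alpha)}s_{\lambda}$ onto that of $s_{(n-1,\alpha)}s_{\lambda}$, i.e. that $c^{\mu}_{(n,\alpha),\lambda}=c^{\mu-\square}_{(n-1,\alpha),\lambda}$ for all $\mu$.

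The crux is the per-row bound $\mu_2\le\alpha_1+\lambda_1$ for every $\mu$ with $c^{\mu}_{(n,\alpha),\lambda}\ne 0$, and I expect this to be the main obstacle. I would read it off the structure of a Littlewood--Richardson filling $T$ of $\mu/(n,\alpha)$ of content $\lambda$. The topmost nonempty row of $T$ consists entirely of $1$'s, so the first row carries only the value $1$. The Yamanouchi (lattice word) condition forces, for every $v\ge 2$, the number of cells of value $v$ in the second row to be at most the number of cells of value $v-1$ in the first row; since the first row contains only $1$'s, the second row can use only the values $1$ and $2$, and the number of $2$'s there is at most the number of $1$'s in the first row. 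Hence the length of the second row of the skew shape is bounded by the total number of $1$'s in $T$, namely $\lambda_1$, which gives $\mu_2\le\alpha_1+\lambda_1$. It is precisely this lattice-word matching that yields the sharp bound, rather than the weaker estimate $\mu_2\le\alpha_1+|\lambda|$ one gets by merely counting entries $\ge 2$.

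For the forward implication assume $n>\lambda_1+\alpha_1$. Then every $\mu$ in the support has $\mu_2\le\alpha_1+\lambda_1<n\le\mu_1$, so $\mu-\square$ is again a partition containing $(n-1,\alpha)$. I would realize the bijection by sliding the first row: the skew shapes $\mu/(n,\alpha)$ and $(\mu-\square)/(n-1,\alpha)$ differ only in that the (all-$1$) first row is translated one column to the left, while all lower rows are unchanged. Because $\mu_2<n$, the translated first row still shares no column with the second row, so no instance of column-strictness is created or destroyed, and the reading word is literally unchanged, so the lattice condition is preserved. This gives $c^{\mu}_{(n,\alpha),\lambda}=c^{\mu-\square}_{(n-1,\alpha),\lambda}$ and hence the identity. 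For sharpness I would exhibit, at $n=\lambda_1+\alpha_1$, the partition $\mu=(n,n,\alpha_2+\lambda_2,\alpha_3+\lambda_3,\dots)$: filling the second row with $\lambda_1$ ones and placing $\lambda_j$ copies of $j$ in row $j+1$ for $j\ge 2$ is a valid Littlewood--Richardson filling, so $c^{\mu}_{(n,\alpha),\lambda}>0$, whereas $\mu_1=\mu_2$ makes $\mu-\square$ a non-partition, so the coefficient of $s_{\mu}$ on the right-hand side vanishes. Thus the identity fails at $n=\lambda_1+\alpha_1$, and an analogous witness with $\mu_2=n$ shows failure for each $n$ with $\alpha_1<n\le\lambda_1+\alpha_1$, completing the ``only if'' direction.
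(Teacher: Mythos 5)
Your proposal is correct and follows essentially the same route as the paper: expand via the Littlewood--Richardson rule, realize the coefficient equality by sliding the (all-$1$) first row of the skew tableau one column to the left, and certify sharpness with the witness $\mu=(n,n,\alpha_2+\lambda_2,\dots)$ at $n=\lambda_1+\alpha_1$, whose equal first two parts prevent $s_\mu$ from appearing in any $f+\square$. Your derivation of the bound $\mu_2\le\alpha_1+\lambda_1$ from the lattice-word condition is a slightly more explicit version of the paper's observation that $n$ exceeds the number of empty boxes plus the number of $1$'s in the second row, and it has the small added benefit of making clear that $\mu-\square$ is again a partition.
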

\begin{proof}
Suppose $n>\lambda_1+\alpha_1$. Let $\nu$ be an integer partition of $n+|\lambda|+|\alpha|$ with $(n,\alpha)\subseteq \nu$. Let $LR_{n,\lambda}^{\nu}$ be the set of all Littlewood-Richardson tableaux of shape
$\nu/(n,\alpha)$ and weight $\lambda$. By the Littlewood-Richardson rule (see \cite{M}) the multiplicity of $s_{\nu}$ in $s_{(n,\alpha)}s_{\lambda}$ is $\#LR_{n,\lambda}^{\nu}$. Let $\nu'$ be the integer partition of $n-1+|\lambda|+|\alpha|$ obtained from $\nu$ by replacing $\nu_1$ by $\nu_1-1$. We define the map
$$ \phi:LR_{n,\lambda}^{\nu}\rightarrow LR_{n-1,\lambda}^{\nu'} $$
by the following procedure: Remove the first empty box in the first row of the tableau and then move all other boxes of the first row one place to the left. The two steps are illustrated below with $n=5,~\alpha=(1,1),~\lambda=(3,1)$ and $\nu=(6,4,1)$:
$$\begin{tikzpicture}[scale=0.5, line width=1pt]
  \draw (0,0) grid (6,1);
  \draw (0,0) grid (4,-1);
  \draw (0,0) grid (1,-2);
  \node[left] (A) at (6,0.5) {1};
  \node[left] (A) at (2,-0.5) {1};
  \node[left] (A) at (3,-0.5) {1};
  \node[left] (A) at (4,-0.5) {2};
  \node[left] (A) at (7,-0.5) {$\rightarrow$};
  \end{tikzpicture}
~
\begin{tikzpicture}[scale=0.5, line width=1pt]
  \draw (1,0) grid (6,1);
  \draw (0,0) grid (4,-1);
  \draw (0,0) grid (1,-2);
  \node[left] (A) at (6,0.5) {1};
  \node[left] (A) at (2,-0.5) {1};
  \node[left] (A) at (3,-0.5) {1};
  \node[left] (A) at (4,-0.5) {2};
   \node[left] (A) at (7,-0.5) {$\rightarrow$};
\end{tikzpicture}
~
\begin{tikzpicture}[scale=0.5, line width=1pt]
  \draw (0,0) grid (5,1);
  \draw (0,0) grid (4,-1);
  \draw (0,0) grid (1,-2);
  \node[left] (A) at (5,0.5) {1};
  \node[left] (A) at (2,-0.5) {1};
  \node[left] (A) at (3,-0.5) {1};
  \node[left] (A) at (4,-0.5) {2};
\end{tikzpicture}.
$$
We want to show that the resulting tableau is indeed a Littlewoood-Richardson tableau so that $\phi$ is well defined. The only condition that has to be checked is that we have no two $1$'s in the first two rows that lie above each other. But this follows from $n>\lambda_1+\alpha_1$, because this implies that $n$ is larger than the number of empty boxes in the second row plus the number of $1$'s in the second row. $\phi$ has an inverse map: Given a tableau in $LR_{n-1,\lambda}^{\nu'}$ we move the first row one place to the right and put an empty box in the gap. So $\phi$ is bijective and $\#LR_{n,\lambda}^{\nu}=\#LR_{n-1,\lambda}^{\nu'}$. This shows that $\{s_{(n,\alpha)}s_{\lambda}\}_{n}$ stabilizes at $\lambda_1+\alpha_1$.
Now let $n=\lambda_1+\alpha_1$ and $\nu=(n,n,\lambda_2+\alpha_2,\lambda_3+\alpha_3,...)$ There is a Littlewood-Richardson tableau of shape $\nu/(n,\alpha)$ and weight $\lambda$: We look at the Ferrer diagram of $\nu$ and put $\lambda_1$ many $1$'s at the end of the second row, $\lambda_2$ many $2$'s at the end of the third row and so on. It follows that we have a Schur function $s_{\nu}$ with $\nu_1=\nu_2$ with multiplicity $\ge 1$ in the decomposition of $s_{(n,\alpha)}s_{\lambda}$. We have
$$ s_{(n,\alpha)}s_{\lambda}\neq f+\square $$
for every symmetric function $f$.
\end{proof}
For a finite arrangement $\mathcal{A}$ of linear subspaces of $\RR^{dn}$ the intersection lattice $L_{\mathcal{A}}$ is the set of intersections of arbitrarily many elements of $\mathcal{A}$ ordered by reversed inclusion. The least element $\hat 0$ is $\RR^{dn}$ and the greatest element $\hat 1$ is the intersection of all elements of $\mathcal{A}$. For a subset $T$ of $L_{\mathcal{A}}$ the join sublattice of $L_{\mathcal{A}}$ generated by $T$ consists of all nonempty intersections of arbitrarily many elements of $T$ also ordered by reversed inclusion. If $\mathcal{A}$ is an arrangement of diagonal subspaces given by equations of the form $w_i=w_j$ for $1\le i<j\le n$, $w=(w_1,...,w_n)\in (\RR^d)^n$, the intersection lattice $L_{\mathcal{A}}$ is isomorphic to the lattice $\Pi_n$ of set partitions of $\{1,...,n\}$ ordered by refinement. For a set partition $\pi\in\Pi_n$ we also write $\pi$ for the corresponding subspace of $\RR^{dn}$. If $\pi\in \Pi_n$ is a set partition into the subsets $B_1,...,B_l$ of $\{1,...,n\}$, we write $\pi=B_1|...|B_l$. In this notation, we have $\hat 0=\{1\}|\{2\}|...|\{n\}$. $\pi$ is said to be finer than $\pi'=C_1|...|C_m$, if for every $1\le i\le l$ there is a $1\le j\le m$ such that $B_i\subseteq C_j$. We may renumber the sets $B_1,...,B_l$ such that $\#B_1\ge...\ge \#B_l$. The integer partition $(\#B_1,...,\#B_l)$ is then called the type of $\pi$. If $\Lambda$ is a set of integer partitions of $n$, then $\Pi_{\Lambda}$ is the join sublattice of $\Pi_n$ generated by all set partitions of type $\lambda$ for all $\lambda\in\Lambda$. 
\begin{proof}[Proof of \ref{main}]
By \cite[Theorem 2.5(ii)]{SW} we have
$$ \tilde H^i(\mathcal{M}_{\Lambda^{(n)}}^d)=\bigoplus_{\pi\in (\Pi_{\Lambda^{(n)}}^{>\hat 0})/S_n} \Ind_{(S_n)_{\pi}}^{S_n}(\tilde H_{\codim(\pi)-i-2}([\hat 0,\pi])\otimes \tilde H_{\codim(\pi)-1}(S^{dn-1}\cap \pi^{\perp})). $$
$(\Pi_{\Lambda^{(n)}}^{>\hat 0})/S_n$ is a set of representatives of the action of $S_n$ on $\Pi_{\Lambda^{(n)}}$ excluding $\hat 0$, $(S_n)_{\pi}$ is the stabilizer subgroup of $\pi$, $\tilde H_j([\hat 0,\pi])$ is the reduced simplicial homology on the order complex $\Delta([\hat 0,\pi])$ in degree $j\ge -1$, $\codim(\pi)$ is the codimension of $\pi$ as a real subspace of $\RR^{dn}$ and $S^{dn-1}$ is the $(dn-1)$-dimensional sphere.
If $\pi$ is of type $\mu=(1^{m_1(\mu)},2^{m_2(\mu)},...)\vdash n$, then its stabilizer $(S_n)_{\pi}$ is the product of wreath products $\times_j S_{m_j(\mu)}[S_j]$ and $\codim(\pi)=d(n-l(\mu))=d\cdot \rank(\mu)$. The length of a chain in $\Pi_n$ from $\hat 0$ to $\pi$ is $\le \sum_{j=1}^{l(\mu)} (\mu_j-1) = n-l(\mu)=\rank(\mu)$. Since the atoms in $\Pi_{\Lambda}$ are of shape $\lambda$ for $\lambda\in \Lambda$, the length of a chain in $\Pi_{\Lambda}$ from $\hat 0$ to $\pi$ has length $\le \rank(\mu)-\rank(\Lambda)+1$ and contributes to homology in degree $\le \rank(\mu)-\rank(\Lambda)-1$.
It follows that if the homology $\tilde H_{\codim(\pi)-i-2}([\hat 0,\pi])$ is not zero, then
$$ -1\le d\cdot \rank(\mu)-i-2\le \rank(\mu)-\rank(\Lambda)-1  $$
and then
$$ (i+1)/d\le \rank(\mu)\le (i+1-\rank(\Lambda))/(d-1). $$
Let $\tilde \mu$ be the integer partition obtained from $\mu$ by removing the parts of size $1$. The rank of $\mu$ and the rank of $\tilde \mu$ are the same.
From \cite[Proposition 2.8]{HR}, we have $\rank(\tilde\mu)+1\le |\tilde\mu|\le 2\cdot\rank(\tilde\mu)$. This yields
$$ 1+(i+1)/d\le |\tilde \mu|  \le 2(i+1-\rank(\Lambda))/(d-1). $$
The subgroup $S_{m_1(\mu)}[S_1]\cong S_{m_1(\mu)}$ acts trivially on $\tilde H_{\codim(\pi)-i-2}([\hat 0,\pi])$. The coordinates of vectors in the space $\pi^{\perp}$ which correspond to the singletons of $\pi$ are zero. It follows that the above copy of $S_{m_1(\mu)}$ acts trivially on $H_{\codim(\pi)-1}(S^{dn-1}\cap \pi^{\perp})$. Let $S^{(m_1(\mu))}$ be the trivial $S_{m_1(\mu)}$-module. We get the following isomorphism of $\times_{j\ge 1} S_{m_j(\mu)}[S_j]$-modules:
 $$\tilde H_{\codim(\pi)-i-2}([\hat 0,\pi])\otimes \tilde H_{\codim(\pi)-1}(S^{dn-1}\cap \pi^{\perp}) \cong $$ $$ S^{(m_1(\mu))}\otimes (\tilde H_{\codim(\pi)-i-2}([\hat 0,\pi])\otimes \tilde H_{\codim(\pi)-1}(S^{dn-1}\cap \pi^{\perp})). $$
We consider the interval $[\hat 0,\pi]$ in $\Pi_{\Lambda^{(n)}}$. The atoms in $[\hat 0,\pi]$ have $\ge n-n_0$ singletons. If we delete $\min\{n-|\tilde\mu|,n-n_0\}$ many singletons from $\pi$, after renumbering we can view $[0,\pi]$ as an interval in $\Pi_{\Lambda^{(max\{|\tilde\mu|,n_0\})}}$. We can also forget about the coordinates of vectors in $\pi^{\perp}$ which correspond to the singletons of $\pi$. We have $\codim(\pi)=d\cdot \rank(\tilde\mu)$. It follows that the $\times_{j\ge 2}S_{m_j(\mu)}[S_j]$-module
$$ \tilde H_{\codim(\pi)-i-2}([\hat 0,\pi])\otimes \tilde H_{\codim(\pi)-1}(S^{dn-1}\cap \pi^{\perp}) $$  
does not depend on $n$ and we write $V_{\tilde \mu}$ for it.
Using the transitivity of induction on $\times_{j\ge 1} S_{m_j(\mu)}[S_j]\le S_{m_1(\mu)}\times S_{n-m_1(\mu)}\le S_n$ we get:
$$ \Ind_{\times_{j\ge 1}S_{m_j(\mu)}[S_j]}^{S_n}(S^{m_1(\mu)}\otimes V_{\tilde \mu})= $$
$$ \Ind_{S_{m_1(\mu)}\times S_{n-m_1(\mu)}}^{S_n}(S^{m_1(\mu)}\otimes Ind_{\times_{j\ge 2}S_{m_j(\mu)}[S_j]}^{S_{n-m_1(\mu)}}(V_{\tilde \mu}))= $$
$$ \Ind_{S_{n-|\tilde \mu|}\times S_{|\tilde\mu|}}^{S_n}(S^{n-|\tilde\mu|}\otimes \Ind_{\times_{j\ge 2}S_{m_j(\tilde\mu)}[S_j]}^{S_{|\tilde\mu|}}(V_{\tilde \mu})). $$
Let
$$ f_{\tilde\mu}:= \ch(\Ind_{\times_{j\ge 2}S_{m_j(\tilde\mu)}[S_j]}^{S_{|\tilde\mu|}}(V_{\tilde \mu})).$$
We have 
$$ \ch(\Ind_{S_{n-|\tilde \mu|}\times S_{|\tilde\mu|}}^{S_n}(S^{n-|\tilde\mu|}\otimes \Ind_{\times_{j\ge 2}S_{m_j(\tilde\mu)}[S_j]}^{S_{|\tilde\mu|}}(V_{\tilde \mu})))= $$
$$ h_{n-|\tilde\mu|}f_{\tilde\mu} $$
where $h_{n-|\tilde\mu|}=s_{(n-|\tilde\mu|)}$.
It follows that the characteristic of $\tilde H^i(\mathcal{M}_{\Lambda^{(n)}}^d)$ is
$$
\sum_{{{\tilde\mu~an~integer~partition~with~no~parts~of~size~1,}\atop{1+(i+1)/d\le |\tilde \mu|\le 2(i+1-\rank(\Lambda))/(d-1)}}} h_{n-|\tilde\mu|}f_{\tilde\mu}. $$
From \ref{productwithsnalpha}, it follows that the sequence stabilizes at a number being larger than $2|\tilde\mu|$ for every $\tilde\mu$ occuring in the sum. This is fulfilled at $4(i+1-\rank(\Lambda))/(d-1)$.
\end{proof}

\section{Improved stability bounds for $k$-equal arrangements}
           \label{Improved stability bounds for $k$-equal arrangements}
	   
	   We consider the sequence $\{\tilde H^i(\mathcal{M}_{(2,1^{n-2})}^d)\}_n$. \ref{main} states that stabilization occurs at $4i/(d-1)$. Compare this to the known results: By \cite[Theorem 1]{C} we have stabilization at $2i$ for $d\ge 3$ and stabilization at $4i$ for $d=2$. By \cite[Theorem 1.1]{HR} we have the following for $i\ge 1$. The sequence is zero from the beginning, if $d-1$ does not divide $i$. Otherwise it stabilizes sharply at $3i/(d-1)$ for odd $d\ge 3$ and it stabilizes sharply at $3i/(d-1)+1$ for even $d\ge 2$. Now we consider the sequence $\{\tilde H^i(\mathcal{M}_{(k,1^{n-k})}^d)\}_n$ for general $k\ge 2$.
	   The stability of this sequence was also considered by Gadish  (\cite[Example 6.11]{G}) as an example of his general results. 
	   We want to determine lower bounds than the ones given in \ref{main} where stabilization occurs for $k\ge d+1$. Let $h_n=s_{(n)}$ be the complete homogeneous symmetric function, $e_n=s_{(1^n)}$ the elementary symmetric function and $\omega$ the involutive ring homomorphism of the ring of symmetric functions with $\omega(h_n)=e_n$. We write $\pi_n$ for the characteristic of $\tilde H^{n-3}(\Delta(\Pi_n))$ and $l_n=\omega(\pi_n)$. For symmetric functions $f$ and $g$ we write $f[g]$ for the plethysm of these two functions.
\begin{Lemma}\label{psi}
Let $d\ge 2,~k\ge d+1,i\ge 0$ and $n\ge 1$. Let $r,t\ge 1$ and $q\ge 0$ be such that $i=(d-1)(n-r-q)+t(k-2)$. Let $U_k:=\sum_{j\ge k}s_{(j-k+1,1^{k-1})}$ and $\psi_{n,q,r,t}$ be 
$$ \begin{cases}
     \omega\left(\omega^k\left(e_r[\sum_{j\ge 1}l_j]\right)|_{\deg~t}[U_k]\right)|_{\deg ~n-q}h_q & if~d~is~even \\ \\
     \left(\left(h_r[\sum_{j\ge 1}l_j]\right)|_{\deg~t}[U_k]\right)|_{\deg~n-q}h_q & if~d~is~odd~and~k~is~even  \\ \\
     \left(\left((-1)^th_r[\sum_{j\ge 1}(-1)^j\pi_j]\right)|_{\deg~t}[U_k]\right)|_{\deg~n-q}h_q & if~d~and~k~are~odd 
   \end{cases}. $$
   Then
\begin{itemize}
\item[(i)] $\psi_{n,q,r,t}=\psi_{n-1,q-1,r,t}+\square$ if $q>n/2$ and $n\ge 2$.
\item[(ii)] $\psi_{n,q,r,t}=\psi_{n-1,q-1,r,t}+\square$ if $d$ is even, $q>tk$ and $n\ge 2$.
\item[(iii)] $\psi_{n,q,r,t}=0$ if $r>t$ or $t>n/k$.
\item[(iv)] $\psi_{n,q,r,t}=0$ if $q\le n/2$ and $n>\frac{2i}{d-1}$.
\item[(v)] $\psi_{n,q,r,t}=0$ if $k\ge d+2$, $q\le tk$ and $n>\frac{ki}{k-d-1}$.
\end{itemize}   
\end{Lemma}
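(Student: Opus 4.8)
The starting point is that in all three cases the definition has the uniform shape
$\psi_{n,q,r,t}=\big(A[U_k]\big)\big|_{\deg~n-q}\,h_q$, with an extra outer $\omega$ applied to $A[U_k]$ precisely when $d$ is even, where $A:=F_r|_{\deg~t}$ is a fixed homogeneous symmetric function of degree $t$ that depends only on $r,t$ (here $F_r$ is the relevant plethysm $e_r[\cdots]$ or $h_r[\cdots]$; the signs $(-1)^t,(-1)^j$ and the inner $\omega^k$ affect neither degrees nor row counts). The key point is that $A[U_k]$ does not depend on $n,q$: the pair $(n,q)$ enters only through the truncation $|_{\deg~n-q}$ and through the factor $h_q$. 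I would first record two degree facts. Since each $l_j,\pi_j$ is homogeneous of degree $j\ge1$, the part $F_r|_{\deg~t}$ vanishes unless $t\ge r$; and since the lowest-degree component of $U_k$ is $(U_k)_k=e_k$ of degree $k$, expanding $A$ in the power-sum basis shows that $A[U_k]$ is supported in degrees $\ge tk$. These two remarks give (iii) immediately: if $r>t$ then $A=0$, and if $t>n/k$ then $n-q\le n<tk$, so the truncation to degree $n-q$ is zero.

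For (i) and (ii) I would use that $W:=A[U_k]|_{\deg~n-q}$ (resp. its image under $\omega$) is \emph{literally the same} symmetric function for $\psi_{n,q,r,t}$ and for $\psi_{n-1,q-1,r,t}$, because $(n-1)-(q-1)=n-q$ while $r,t,k$ are unchanged. Hence both claims reduce to $W\,h_q=W\,h_{q-1}+\square$. Expanding $W=\sum_\lambda c_\lambda s_\lambda$ and applying \ref{productwithsnalpha} with $\alpha=\emptyset$ (so $s_\lambda h_q=s_\lambda h_{q-1}+\square$ whenever $q>\lambda_1$) termwise, it suffices to check $q>\lambda_1$ for every $\lambda$ occurring in $W$. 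For (i) this is immediate, since $\lambda_1\le|\lambda|=n-q<q$ as soon as $q>n/2$. For (ii) I would establish the bound $\lambda_1\le tk$ on the shapes of $\omega(A[U_k])$, equivalently that the shapes of $A[U_k]$ have at most $tk$ rows. Writing $A=\sum_\rho a_\rho p_\rho$ with $|\rho|=t$ and using that $g\mapsto p_m[g]$ is additive while $(f_1f_2)[g]=f_1[g]\,f_2[g]$, I reduce to the single-hook estimate: the shapes in $p_m[s_{(a,1^{k-1})}]$ have at most $mk$ rows (because $s_{(a,1^{k-1})}$ has $k$ rows and the shapes of $p_m[s_\nu]$ have at most $m\,\ell(\nu)$ rows). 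Since the number of rows is subadditive under the Littlewood--Richardson product, every shape in $p_\rho[U_k]=\prod_i p_{\rho_i}[U_k]$ has at most $\sum_i\rho_i k=tk$ rows, hence so does $A[U_k]$; then $q>tk\ge\lambda_1$ finishes (ii). It is exactly the outer $\omega$, present only in the even-$d$ case, that turns the unbounded column lengths in $U_k$ into bounded row lengths, which is why (ii) is restricted to $d$ even.

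For the vanishing statements (iv) and (v) I would argue by contradiction, assuming $\psi_{n,q,r,t}\neq0$ so that, by (iii), $r\le t$ and $t\le n/k$. Rewriting the defining relation as $(d-1)(n-q)=i+(d-1)r-t(k-2)$ and using $r\le t$ together with $k\ge d+1$ yields $(d-1)(n-q)\le i+t(d+1-k)\le i$. For (iv) the hypothesis $q\le n/2$ forces $(d-1)(n-q)\ge(d-1)n/2$, whence $n\le 2i/(d-1)$, contradicting $n>2i/(d-1)$. For (v) I would instead bound $n-r-q\ge n-t-tk$ from $r\le t$ and $q\le tk$, substitute into $i=(d-1)(n-r-q)+t(k-2)$, and then feed in $t\le n/k$ and $k\ge d+2$ to obtain $i\ge n(k-d-1)/k$, i.e. $n\le ki/(k-d-1)$, contradicting the hypothesis.

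The routine parts are the degree bookkeeping (lowest degree $tk$, vanishing below degree $r$) and the two short inequality computations in (iv)--(v). The one genuinely structural step, and where I expect the actual work, is the row-count bound $\lambda_1\le tk$ underlying (ii): it rests on the estimate $\ell(p_m[s_\nu])\le m\,\ell(\nu)$ for the plethysm of a Schur function by a power sum, which I would either cite or derive from the border-strip/$m$-quotient description of $p_m[s_\nu]$, together with the subadditivity of the number of rows under the Littlewood--Richardson product.
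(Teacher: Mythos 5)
Your proof is correct and follows essentially the same route as the paper: the same reduction of (i) and (ii) to Lemma~\ref{productwithsnalpha} applied termwise to $W\,h_q$ with the $n$-independent factor $W=A[U_k]|_{\deg\,n-q}$, the same degree bookkeeping for (iii), and the same inequality manipulations for (iv) and (v). The only difference is in (ii), where the paper obtains the bound $\lambda_1\le tk$ by citing \cite[Proposition 4.3(d)]{HR}, whereas you rederive it from the row estimate for $p_m[s_\nu]$ and subadditivity under the Littlewood--Richardson product; either is fine.
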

\begin{proof}
(i) We have $\psi_{n,q,r,t}=f_{n-q}h_q$ for a symmetric function $f_{n-q}$ of degree $n-q$ and $h_q=s_{(q)}$. From \ref{productwithsnalpha}, we get 
$$ \psi_{n,q,r,t}=f_{n-q}h_q=f_{n-q}h_{q-1}+\square=f_{(n-1)-(q-1)}h_{q-1}+\square=\psi_{n-1,q-1,r,t}+\square $$ 
if $q>n-q$ or equivalently $q>n/2$.\\
(ii) If $d$ is even, then $\psi_{n,q,r,t}=\omega(f_t[U_k])|_{deg~n-q}h_q$ for a symmetric function $f_t$ of degree $t$. The first column of every Schur function in $U_k=\sum_{j\ge k}s_{(j-k+1,1^{k-1})}$ has length $k$. From \cite[Proposition 4.3 (d)]{HR} follows that for every $s_{\lambda}$ with $\lambda\vdash n-q$ occuring in the Schur function decomposition of $\omega(f_t[U_k])|_{deg~n-q}$ the first row of $\lambda$ has length $\le tk$. If $q>tk$, it follows from \ref{productwithsnalpha} that
$$ \psi_{n,q,r,t}=\omega(f_t[U_k])|_{deg~n-q}h_q=
\omega(f_t[U_k])|_{deg~n-q}h_{q-1}+\square= $$  $$
\omega(f_t[U_k])|_{deg~(n-1)-(q-1)}h_{q-1}+\square=\psi_{n-1,q-1,r,t}+\square. $$
\\
(iii) If $r>t$ the terms $e_r[\sum_{j\ge 1}l_j],~h_r[\sum_{j\ge 1}l_j]$ and
$(-1)^th_r[\sum_{j\ge 1}(-1)^j\pi_j]$ only have terms of degree $>t$. Then the whole term $\psi_{n,q,r,t}$ is zero. $U_k$ only has terms of degree $\ge k$. Then $f_t[U_k]$ for a symmetric function $f_t$ of degree $t$ has only terms of degree $\ge tk$. If $t>n/k$ then $tk>n\ge n-q$ and again $\psi_{n,q,r,t}$ is zero. \\
(iv) Suppose $\psi_{n,q,r,t}\neq 0$. We have to show that $q>n/2$ or $n\le \frac{2i}{d-1}$. Suppose $q\le n/2$. From $\psi_{n,q,r,t}\neq 0$ and (ii) we get $r\le t$. From $q\le n/2$ and $i=(d-1)(n-r-q)+t(k-2)$ we get
$$ \frac{i}{1-d}+n/2+\frac{t(k-2)}{d-1}\le \frac{i}{1-d}+n-q+\frac{t(k-2)}{d-1}=r. $$
Using $r\le t$ we get 
$$ \frac{i}{1-d}+n/2+\frac{t(k-2)}{d-1}\le t $$
and simplifying yields
$$ n/2\le \frac{i}{d-1}+\frac{t(d+1-k)}{d-1}. $$
Using $k\ge d+1$ we get
$$ n\le \frac{2i}{d-1}. $$ 
(v) Let $k\ge d+2$. Suppose $\psi_{n,q,r,t}\neq 0$ and $q\le tk$. We have to show that $n\le \frac{ki}{k-d-1}$. From $q\le tk$, $i=(d-1)(n-r-q)+t(k-2)$ and $r\le t$ by (iii) we get
$$ \frac{i}{1-d}+n-tk+\frac{t(k-2)}{d-1}\le \frac{i}{1-d}+n-q+\frac{t(k-2)}{d-1}=r\le t. $$
It follows
$$ \frac{i}{1-d}+n-tk+\frac{t(k-2)}{d-1}\le t $$
and then 
$$ n\le \frac{i}{d-1}+t(k+\frac{k-2}{1-d}+1). $$
From (iii) we know $t\le n/k$. It follows
$$ n\le \frac{i}{d-1}+\frac{n}{k}(k+\frac{k-2}{1-d}+1) $$
and then
$$ n(\frac{k-2}{d-1}-1)\le \frac{ki}{d-1}. $$
Using $k\ge d+2$ we get
$$ n\le \frac{\frac{ki}{d-1}}{\frac{k-2}{d-1}-1}=\frac{ki}{k-d-1}. $$
\end{proof}

\begin{Theorem}
Let $d\ge 2$, $k\ge d+1$ and $i\ge 0$. The sequence 
$\{\tilde H^i(\mathcal{M}_{(k,1^{n-k})}^d)\}_n$ stabilizes at $\frac{2i}{d-1}$. If $d$ is even and $k\ge d+2$, the sequence stabilizes at $\frac{ki}{k-d-1}$.
\end{Theorem}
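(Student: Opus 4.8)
The plan is to reduce the statement entirely to Lemma \ref{psi}, by first producing an explicit expansion of the Frobenius characteristic and then telescoping it. Since the paper's notion of stabilization is phrased for representations and for their characteristics interchangeably, and $\ch$ intertwines the operation $+\square$, it suffices to work with characteristics. The first step is therefore to establish the identity
$$ \ch\bigl(\tilde H^i(\mathcal M^d_{(k,1^{n-k})})\bigr) = \sum_{\substack{r,t\ge 1,\ q\ge 0 \\ i=(d-1)(n-r-q)+t(k-2)}} \psi_{n,q,r,t}. $$
This comes from the Sundaram--Welker formula \cite[Theorem 2.5(ii)]{SW} already used in the proof of \ref{main}: for the $k$-equal arrangement the relevant $\pi$ are exactly the partitions whose nonsingleton blocks have size $\ge k$, so in the expansion $\sum_{\tilde\mu} h_{n-|\tilde\mu|} f_{\tilde\mu}$ every part of $\tilde\mu$ is $\ge k$. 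Here $q=n-|\tilde\mu|$ counts the singletons, $t=l(\tilde\mu)$ counts the large blocks, and $r$ records the homological degree of the lower interval; the factor $f_{\tilde\mu}$ is assembled from the $S_n$-equivariant homology of the intervals of the $k$-equal partition lattice (computed by Björner--Welker and Sundaram--Wachs, and built by plethysm from $U_k$ and from the $\pi_j,l_j$), while the sphere factor $\tilde H_{\codim(\pi)-1}(S^{dn-1}\cap\pi^\perp)$ supplies the orientation twist. The three cases in the definition of $\psi_{n,q,r,t}$ and the occurrences of $\omega$ are precisely the sign bookkeeping forced by the parities of $d$ and $k$, and the constraint $i=(d-1)(n-r-q)+t(k-2)$ is the degree-matching condition relating $\codim(\pi)-i-2$ to the degree of the lower-interval homology. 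Proving this identity is the genuine obstacle; everything afterward is elementary.

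Granting the decomposition, I would prove the first bound termwise. Fix $n>\tfrac{2i}{d-1}$. For a triple with $r>t$, part (iii) makes both $\psi_{n,q,r,t}$ and its level-$(n-1)$ partner $\psi_{n-1,q-1,r,t}$ vanish, so the relation $\psi_{n,q,r,t}=\psi_{n-1,q-1,r,t}+\square$ is trivial. For a triple with $r\le t$, the condition $q>n/2$ is equivalent to $n>2r+\tfrac{2(i-t(k-2))}{d-1}$, and
$$ 2r+\frac{2(i-t(k-2))}{d-1}\le \frac{2i}{d-1}+\frac{2t(d+1-k)}{d-1}\le \frac{2i}{d-1}, $$
using $r\le t$ and then $k\ge d+1$. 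Hence $n>\tfrac{2i}{d-1}$ forces $q>n/2$, and part (i) gives the desired relation. Since the surviving triples satisfy $q>n/2>0$, they all have $q\ge 1$ and correspond bijectively to the level-$(n-1)$ triples, while the triples with $q=0$ satisfy $q\le n/2$ and are killed by (iv). Summing the termwise identities yields $\ch(\tilde H^i(\mathcal M^d_{(k,1^{n-k})}))=\ch(\tilde H^i(\mathcal M^d_{(k,1^{n-1-k})}))+\square$ for every $n>\tfrac{2i}{d-1}$, which is stabilization at $\tfrac{2i}{d-1}$.

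The second bound is the same argument with the condition $q>n/2$ of (i) replaced by the condition $q>tk$ of (ii), which requires $d$ even. Fix $n>\tfrac{ki}{k-d-1}$ (finite because $k\ge d+2$). Again only triples with $r\le t$ contribute, and among these only those with $t\le n/k$, the rest vanishing by (iii) at both levels. For such a triple, $q>tk$ is equivalent to $n>tk+r+\tfrac{i-t(k-2)}{d-1}$, and bounding the right-hand side first by $r\le t$ and then by $t\le n/k$ gives
$$ tk+r+\frac{i-t(k-2)}{d-1}\le \frac{i}{d-1}+\frac{n}{k}\Bigl(k+1-\frac{k-2}{d-1}\Bigr), $$
whose right-hand side is strictly less than $n$ exactly when $n>\tfrac{ki}{k-d-1}$, by the same computation carried out in the proof of (v). Thus $q>tk$, part (ii) applies, and the termwise identities sum (with the $q=0$ triples removed by (v)) to $\ch(\tilde H^i(\mathcal M^d_{(k,1^{n-k})}))=\ch(\tilde H^i(\mathcal M^d_{(k,1^{n-1-k})}))+\square$, giving stabilization at $\tfrac{ki}{k-d-1}$.

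The only routine points left to dispatch are the degenerate case $i=0$ (where the complement is connected for $d\ge 2$, the sequence is identically zero, and stabilization is vacuous, which also disposes of the small values of $n$ where $n\ge 2$ could fail in (i)/(ii)) and the divisibility condition $d-1\mid i-t(k-2)$ implicit in the index set, which plays no role in the inequalities above. As emphasized, the substantive difficulty is entirely in the first step: expressing the Sundaram--Welker sum through the $S_n$-equivariant homology of the $k$-equal partition lattice in the closed plethystic form $\sum\psi_{n,q,r,t}$ with the correct parity-dependent signs. Once that identity is secured, Lemma \ref{psi} does all the work and both stability bounds follow from the elementary threshold estimates displayed here.
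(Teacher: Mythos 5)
Your proposal is correct and takes essentially the same route as the paper: the plethystic expansion $\ch\bigl(\tilde H^i(\mathcal M^d_{(k,1^{n-k})})\bigr)=\sum\psi_{n,q,r,t}$ that you single out as ``the genuine obstacle'' is precisely \cite[Theorem 4.4(iii)]{SW}, which the paper simply cites rather than re-derives, and your termwise telescoping is exactly the paper's argument via Lemma~\ref{psi}. The only cosmetic difference is that you re-derive the threshold inequalities inline, whereas the paper invokes parts (iv) and (v) of Lemma~\ref{psi} as black boxes to kill the terms with $q\le n/2$ (resp.\ $q\le tk$).
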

\begin{proof}
From \cite[Theorem 4.4(iii)]{SW}, we have that the characteristic of the
$S_n$-representation on $\tilde H^i(\mathcal{M}_{(k,1^{n-k})}^d)$ is

$$ \sum_{r,t\ge 1,q\ge 0:i=(d-1)(n-r-q)+t(k-2)}\psi_{n,q,r,t} $$
where $\psi_{n,q,r,t}$ is as in the previous lemma.
If $q> n/2$ then we get
$$ \psi_{n,q,r,t}=\psi_{n-1,q-1,r,t}+\square $$
from \ref{psi}(i). From \ref{psi} (iv) we get $\psi_{n,q,r,t}=0$ if $q\le n/2$ and $n>\frac{2i}{d-1}$. Putting these facts together we get for $n>\frac{2i}{d-1}$:
$$ \sum_{r,t\ge 1,q\ge 0:i=(d-1)(n-r-q)+t(k-2)}\psi_{n,q,r,t}=
\sum_{r,t\ge 1,q\ge 1:i=(d-1)(n-r-q)+t(k-2)}\psi_{n,q,r,t}= $$  $$
\sum_{r,t\ge 1,q\ge 1:i=(d-1)(n-r-q)+t(k-2)}\psi_{n-1,q-1,r,t}+\square= 
\sum_{r,t\ge 1,q\ge 0:i=(d-1)(n-1-r-q)+t(k-2)}\psi_{n-1,q,r,t}+\square. $$
Now let $d$ be even and $k\ge d+2$.
If $d$ is even and $q>tk$ we have 
$$ \psi_{n,q,r,t}=\psi_{n-1,q-1,r,t}+\square $$
from \ref{psi}(ii) and $\psi_{n,q,r,t}=0$ if $q\le tk$ and $n>\frac{ki}{k-d-1}$ from \ref{psi} (v). For $n>\frac{ki}{k-d-1}$
the same computation as above yields the stability property.
\end{proof}

\begin{Question}
Is there an explicit formula for the sharp stability bound of $\tilde H^i(\mathcal{M}_{(k,1^{n-k})}^d)$ for general $k,d,i$? 
\end{Question}
~\\ \\
\begin{Table}[Sharp stability bounds for $\tilde H^i(\mathcal{M}_{(k,1^{n-k})}^2)$]
~\\
If $k$ is fixed and $i$ grows, the sequence of bounds appears to increase by $1$ in most of the steps especially at the beginning and with large $k$. Later, there also appear steps with bound differences $2$ or $3$. \\ \\
$k=3:$\\ \\
\begin{tabular}{|*{92}{l|}}\hline
i & $3$ & $4$ & $5$ & $6$ & $7$ & $8$ & $9$ & $10$ & $11$ & $12$ & $13$ & $14$\\ \hline
bound & $6$ & $7$ & $8$ & $11$ & $13$ & $14$ & $16$ & $18$ & $20$ & $21$ & $23$ & $25$\\ \hline
\end{tabular}
~\\ \\ \\
$k=4:$\\ \\
\begin{tabular}{|*{92}{l|}}\hline
i & $5$ & $6$ & $7$ & $8$ & $9$ & $10$ & $11$ & $12$ & $13$ & $14$ & $15$ & $16$\\ \hline
bound & $8$ & $9$ & $10$ & $11$ & $12$ & $15$ & $17$ & $18$ & $19$ & $20$ & $22$ & $24$\\ \hline
\end{tabular}
~\\ \\ \\
$k=5:$\\ \\
\begin{tabular}{|*{92}{l|}}\hline
i & $7$ & $8$ & $9$ & $10$ & $11$ & $12$ & $13$ & $14$ & $15$ & $16$ & $17$ & $18$ & $19$ & $20$\\ \hline
bound & $10$ & $11$ & $12$ & $13$ & $14$ & $15$ & $16$ & $19$ & $21$ & $22$ & $23$ & $24$ & $25$ & $26$\\ \hline
\end{tabular}
~\\ \\ \\
$k=6:$\\ \\
\begin{tabular}{|*{92}{l|}}\hline
i & $9$ & $10$ & $11$ & $12$ & $13$ & $14$ & $15$ & $16$ & $17$ & $18$ & $19$ & $20$ & $21$ & $22$ & $23$\\ \hline
bound & $12$ & $13$ & $14$ & $15$ & $16$ & $17$ & $18$ & $19$ & $20$ & $23$ & $25$ & $26$ & $27$ & $28$ & $29$\\ \hline
\end{tabular}
~\\ \\ \\
$k=7:$\\ \\
\begin{tabular}{|*{92}{l|}}\hline
i & $11$ & $12$ & $13$ & $14$ & $15$ & $16$ & $17$ & $18$ & $19$ & $20$ & $21$ & $22$ & $23$ & $24$ & $25$ & $26$ & $27$	\\ \hline
bound & $14$ & $15$ & $16$ & $17$ & $18$ & $19$ & $20$ & $21$ & $22$ & $23$ & $24$ & $27$ & $29$ & $30$ & $31$ & $32$ & $33$\\ \hline
\end{tabular}
~\\ \\ \\
$k=8:$\\ \\
\begin{tabular}{|*{92}{l|}}\hline
i & $13$ & $14$ & $15$ & $16$ & $17$ & $18$ & $19$ & $20$ & $21$ & $22$ & $23$ & $24$ & $25$ & $26$ & $27$ & $28$ & $29$	\\ \hline
bound & $16$ & $17$ & $18$ & $19$ & $20$ & $21$ & $22$ & $23$ & $24$ & $25$ & $26$ & $27$ & $28$ & $31$ & $33$ & $34$ & $35$\\ \hline
\end{tabular}
~\\ \\ \\
$k=9:$\\ \\
\begin{tabular}{|*{92}{l|}}\hline
i & $15$ & $16$ & $17$ & $18$ & $19$ & $20$ & $21$ & $22$ & $23$ & $24$ & $25$ & $26$ & $27$ & $28$ & $29$ & $30$ & $31$	\\ \hline
bound & $18$ & $19$ & $20$ & $21$ & $22$ & $23$ & $24$ & $25$ & $26$ & $27$ & $28$ & $29$ & $30$ & $31$ & $32$ & $35$ & $37$\\ \hline
\end{tabular}
\end{Table}

\end{document}